\documentclass[12pt]{article}

\setlength{\textwidth}{6.3in}
\setlength{\textheight}{8.7in}
\setlength{\topmargin}{0pt}
\setlength{\headsep}{0pt}
\setlength{\headheight}{0pt}
\setlength{\oddsidemargin}{0pt}
\setlength{\evensidemargin}{0pt}

\usepackage{amsmath,amssymb}
\usepackage{amsthm}
\usepackage{pstricks}
\usepackage{graphics,color,url}
\newtheorem{prop}{Proposition}[section]
\newtheorem{thm}[prop]{Theorem}

\newcommand{\sgn}{\text{sgn}}
\newcommand{\inv}{\text{inv}}
\newcommand{\maj}{\text{maj}}

\bibliographystyle{elsart-num-sort}

\title{Statistical Distributions and $q$-Analogues of $k$-Fibonacci Numbers}

\author{Adam M. Goyt\footnote{Minnesota State University Moorhead, Moorhead, MN 56563}\\goytadam@mnstate.edu\and Brady L. Keller\footnotemark[1]\\kellerbr@mnstate.edu\and Jonathan E. Rue\footnote{North Dakota State University, Fargo, ND 58102}\\jonathan.rue@ndsu.edu}

\begin{document}

\maketitle

\begin{abstract}
We study $q$-analogues of $k$-Fibonacci numbers that arise from weighted tilings of an $n\times1$ board with tiles of length at most $k$.  The weights on our tilings arise naturally out of distributions of permutations statistics and set partitions statistics.  We use these $q$-analogues to produce $q$-analogues of identities involving $k$-Fibonacci numbers.  This is a natural extension of results of the first author and Sagan on set partitions and the first author and Mathisen on permutations.  In this paper we give general $q$-analogues of $k$-Fibonacci identities for arbitrary weights that depend only on lengths and locations of tiles.  We then determine weights for specific permutation or set partition statistics and use these specific weights and the general identities to produce specific identities.  

\bigskip\noindent \textbf{Keywords:} generalized fibonacci numbers; $q$-analogues; permutations; set partitions
\end{abstract}

\section{Introduction}

Miles~\cite{Miles} defines the $k$-generalized Fibonacci numbers $f^{(k)}_n$ by $$f_n^{(k)}=\sum_{i=0}^kf_{n-i}^{(k)},$$ with $f_n^{(k)}=0$ for $0\leq n\leq k-2$ and $f_{k-1}^{(k)}=1$.  

We will work with a shifted version of these {\it $k$-Fibonacci numbers}, $F_n^k$, defined by the recursion $$F_n^k=F_{n-1}^k+F_{n-2}^k+\cdots F_{n-k}^k,$$ if $n>0$, with $F_n^k=0$ if $n<0$ and $F_0^k=1$.  Note that $F_n^2$ are the usual Fibonacci numbers.

An $n\times 1$ {\it board} is a row of $n$ squares as pictured below.  

\begin{pspicture}(0,0)(5,1)
\pspolygon(0,0)(0,1)(5,1)(5,0)
\psline(1,0)(1,1)
\psline(2,0)(2,1)
\rput(2.5,.5){$\cdots$}
\rput(3,.5){$n$}
\rput(3.5,.5){$\cdots$}
\psline(4,0)(4,1)
\end{pspicture}

A {\it tile of length $i$} is simply an $i\times 1$ board.  A {\it tiling} is any arrangement of non overlapping tiles that cover every square on an $n\times 1$ board.  We will often restrict the lengths of the tiles that we can use. Let $T_n^k$ be the set of tilings of an $n\times 1$ board using tiles of length at most $k$.  Define $|T_0^k|=1$ by including the empty tiling of the empty board.

\begin{thm}
For $n\in \mathbb{Z}
$, we have $\left|T_n^k\right|=F_n^k$.
\end{thm}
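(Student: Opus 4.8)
The plan is to prove $|T_n^k| = F_n^k$ by showing that the counting sequence $|T_n^k|$ satisfies the same recurrence and initial conditions as $F_n^k$, so that the two sequences agree for all integers $n$. Since $F_n^k$ is defined by the recursion together with the boundary data $F_0^k = 1$ and $F_n^k = 0$ for $n < 0$, it suffices to verify that $|T_n^k|$ obeys these exact conditions.

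First I would handle the base and negative cases directly. For $n < 0$ there is no board to tile, so $T_n^k = \emptyset$ and $|T_n^k| = 0 = F_n^k$. For $n = 0$, the convention stated in the excerpt gives $|T_0^k| = 1 = F_0^k$ via the empty tiling. These match the boundary conditions exactly, so the remaining work is the recurrence for $n > 0$.

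Next I would establish the recurrence $|T_n^k| = \sum_{i=1}^{k} |T_{n-i}^k|$ by a standard decomposition according to the length of the last (rightmost) tile. Given a tiling of the $n \times 1$ board, the final tile has some length $i$ with $1 \le i \le k$; removing it leaves a tiling of an $(n-i) \times 1$ board, and conversely every tiling of an $(n-i) \times 1$ board extends uniquely to a tiling of the $n \times 1$ board by appending a tile of length $i$. This gives a bijection between $T_n^k$ and the disjoint union $\bigsqcup_{i=1}^{k} T_{n-i}^k$, hence $|T_n^k| = \sum_{i=1}^{k} |T_{n-i}^k|$, which is precisely the defining recursion for $F_n^k$. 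Note this argument is clean even when $n - i < 0$, since those terms contribute $0$ and correspond to no valid tilings.

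Finally, since $|T_n^k|$ and $F_n^k$ satisfy the same recurrence and agree on the full set of boundary values ($n \le 0$ and the negatives), induction on $n$ forces equality for all $n \in \mathbb{Z}$. I do not anticipate a serious obstacle here; the only point requiring mild care is making the ``last tile'' decomposition genuinely bijective and confirming that the degenerate cases ($n - i < 0$) are consistent with the $F_n^k = 0$ convention, so that the recurrence and the base cases dovetail without any edge-case mismatch.
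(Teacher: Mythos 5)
Your proof is correct and takes essentially the same approach as the paper: both verify the boundary cases ($n\le 0$) and establish the defining recurrence by conditioning on the length of a single end tile. The only cosmetic difference is that you remove the \emph{last} tile while the paper removes the \emph{first}; by the left-right symmetry of the board these are interchangeable.
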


\begin{proof}
Let $A_n^k$ be the number of tilings of an $n\times1$ board using tiles of length at most $k$.
The first tile could be of length $1,2,3,...,$ or $k$.  If the first tile is of length $1$, then all possible tilings of the remaining $(n-1)\times1$ board are counted by $A_{n-1}^k$.  If the first tile is of length $2$, then all possible tilings of the remaining $(n-2)\times1$ board are counted by $A_{n-2}^k$.  This pattern continues until the first tile is of length $k$ in which case the tilings of the remaining $(n-k)\times1$ board are counted by $A_{n-k}^k$.

This gives us that $$A_n^k=A_{n-1}^k+A_{n-2}^k+\dots+A_{n-k}^k.$$

Notice that for $n<0$, $A_n^k=F_n^k=0$ and that $A_0^k=1=F_0^k$.  The former represents the impossible case of a tiling of negative length and the latter represents the empty tiling.  Thus, $\left|T_n^k\right|=F_n^k$ for $n\in \mathbb{Z}$.
\end{proof}

Define a weight function $wt:T_n^k\rightarrow \mathbb{Z}[z_1,z_2,\dots,z_k,q]$ in the following way.  Let $T\in T_n^k$ be a tiling and write $T=t_1t_2\dots t_m$, where $t_j$ is a tile of $T$.  Let $wt(t_j)=z_if_{i,\sigma,\tau}(q)$, where $t_j$ has length $i$, $t_j$ begins in position $\sigma$, the length of the board following $t_j$ is $\tau$, and $f_{i,\sigma,\tau}(q)\in \mathbb{Z}[q]$ is a monomial.  We let $wt$ be a multiplicative function, so that $wt(T)=wt(t_1t_2\dots t_m)=wt(t_1)wt(t_2)\cdots wt(t_m)$.  

If we have a weight function $wt$ as defined above, and a vector ${\bf z}=(z_1,z_2,\dots,z_k)$, then we define $$F_n^k({\bf z};q)=\sum_{\tau\in T_n^k}wt(\tau)$$ to be the $q$-analogue of the $k$-Fibonacci numbers associated to the weight function $wt$.  

In some cases we will only tile a portion of a board at a time and leave either the beginning or the end to be tiled later.  Let $T_{n,m^+}^k$ be the set of tilings of an $n\times 1$ board using tiles of length at most $k$ with an $m\times 1$ board appended to the end, and let $T_{n,m^-}^k$ be the set of tilings of an $n\times 1$ board using tiles of length at most $k$ with an $m\times 1$ board appended to the beginning.  Notice that because we are not tiling the extra $m\times 1$ board, $|T^k_{n,m^+}|=|T^k_{n,m^-}|=F_n^k$.  

This does however change a weighted tiling.  Since the polynomial $f_{i,\sigma,\tau}(q)$ may depend on the length of the board preceding or the length of the board following the tile under consideration, we may need to increase the weight of each tile.  In this case we define the shifted vector $(z_1s^+_{m,1}(q),z_2s^+_{m,2}(q),\dots,z_ks^+_{m,k}(q))$, where the $s^+_{m,i}(q)\in \mathbb{Z}[q]$ are polynomials.  Each of the $s_{m,i}^+(q)$ will increase the weight of a tile of length $i$ by an appropriate amount.  We will write $$\sum_{T\in T_{n,m^+}^k}wt(\tau)=F_n^k(z_1s^+_{m,1}(q),z_2s^+_{m,2}(q),\dots,z_ks^+_{m,k}(q);q)=:F_n^k({\bf zs^+_m};q).$$  Similarly, define the shifted vector $(z_1s^-_{m,1}(q),z_2s^-_{m,2}(q),\dots,z_ks^-_{m,k}(q))$, where the $s_{m,i}^-(q)\in\mathbb{Z}[q]$ are polynomials and write $$\sum_{T\in T_{n,m^-}^k}wt(\tau)=F_n^k(z_1s^-_{m,1}(q),z_2s^-_{m,2}(q),\dots,z_ks^-_{m,k}(q);q)=:F_n^k({\bf zs^-_m};q).$$ 

\begin{thm} For $n\geq 1$ and an arbitrary weight function $wt:T_n^k\rightarrow \mathbb{Z}[z_1,z_2,\dots,z_k,q]$, the $q$-analogue of $F_n^k$ satisfies \begin{eqnarray*}F_n^k({\bf z};q)&=&z_1f_{1,1,n-1}(q)F_{n-1}^k({\bf s^-_1z};q)+z_2f_{2,1,n-2}(q)F_{n-2}^k({\bf s^-_2z};q)+\cdots+\\&&z_kf_{k,1,n-k}(q)F_{n-1}^k({\bf s^-_kz};q).\\ \end{eqnarray*} \end{thm}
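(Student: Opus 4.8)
The plan is to mimic the combinatorial decomposition used in the proof of Theorem 1.1, but now tracking weights rather than merely counting. I would partition the tiling set $T_n^k$ according to the length $i$ of the first tile $t_1$, where $i$ ranges over $1,2,\dots,k$. For a tiling $T=t_1t_2\cdots t_m$ whose first tile has length $i$, the tile $t_1$ begins in position $1$ and is followed by a board of length $n-i$, so by definition its weight is $wt(t_1)=z_if_{i,1,n-i}(q)$. Since $wt$ is multiplicative, $wt(T)=z_if_{i,1,n-i}(q)\,wt(t_2\cdots t_m)$, and the factor $z_if_{i,1,n-i}(q)$ is common to every tiling in this block of the partition. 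Summing over $i$ will produce the $k$ terms on the right-hand side.

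First I would set up the bijection: deleting the first tile $t_1$ of length $i$ sends a tiling of $T_n^k$ (with first tile of length $i$) to the tiling $t_2\cdots t_m$ of the board occupying the original positions $i+1,i+2,\dots,n$, which is an $(n-i)\times 1$ board. This correspondence is visibly a bijection onto the set of all tilings of that board. The only delicate point is the bookkeeping of weights: each surviving tile $t_j$ with $j\geq 2$ keeps its original starting position $\sigma$ and trailing length $\tau$, and the trailing length is the same whether we view $t_j$ inside the full board or inside the truncated board. The starting positions, however, are all shifted down by $i$ when we re-index the truncated board to begin at position $1$.

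The heart of the argument, and the step I expect to require the most care, is identifying the total weight $\sum wt(t_2\cdots t_m)$ of the surviving tilings with $F_{n-i}^k(\mathbf{s^-_i z};q)$. This is precisely what the shifted notation was built to encode: the truncated board, viewed with a global position shift of $i$, is exactly an $(n-i)\times 1$ board carrying an $i\times 1$ board prepended to its beginning, that is, an element of $T_{n-i,i^-}^k$, and the correction polynomials $s^-_{i,j}(q)$ are defined so that multiplying each $z_j$ by $s^-_{i,j}(q)$ accounts for the position shift of $i$ experienced by every tile of length $j$. Hence $\sum wt(t_2\cdots t_m)=\sum_{T'\in T_{n-i,i^-}^k}wt(T')=F_{n-i}^k(\mathbf{s^-_i z};q)$ by the definition given just before the statement. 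Verifying this identification reduces to checking that the shift $s^-_{i,j}$ correctly transports the monomials $f_{j,\sigma,\tau}(q)$ under a uniform leftward shift of the starting positions by $i$; once that is in place, summing over $i=1,\dots,k$ yields the claimed recursion. Finally I would observe that the boundary cases are automatic: when $n-i<0$ there is no first tile of length $i$ and $F_{n-i}^k=0$, while $n-i=0$ leaves only the empty tiling, consistent with $F_0^k=1$.
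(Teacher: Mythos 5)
Your proof is correct and follows essentially the same route as the paper's: decompose $T_n^k$ by the length $i$ of the first tile, factor out its weight $z_if_{i,1,n-i}(q)$ by multiplicativity, and identify the remaining tilings with elements of $T_{n-i,i^-}^k$, whose weighted sum is $F_{n-i}^k({\bf s^-_i z};q)$ by definition. The paper compresses all of this into two sentences, so your write-up is simply a more detailed account of the same argument.
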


\begin{proof}  This follows immediately from the fact that $wt$ is defined to be multiplicative and the proof of Theorem 1.1.  We note that if the first tile is of length $i$ then the remaining tiling is of an $n\times1$ board with an $i\times1$ board appended to the beginning.\end{proof}

In the next section we will use tilings of boards with tiles of length at most $k$ to give bijective proofs of identities involving the $k$-Fibonacci numbers.  Furthermore, we will give identities involving the $q$-analogues of the $k$-Fibonacci numbers in the general sense as we did in Theorem 1.2.  In Section 3, we will describe sets of permutations that are counted by $k$-Fibonacci numbers and provide results involving $q$-analogues where the weight functions are defined by the inversion number or the major index of the permutations.  In Section 4, we will describe sets of  set partitions that are counted by $k$-Fibonacci numbers and provide results involving $q$-analogues where the weight functions are defined by the {\it rb} or the {\it ls} statistic.

\section{Identities}

We will describe identities involving $k$-Fibonacci numbers and provide bijective proofs of these identities using tilings.  We will then adapt these proofs to provide $q$-analogues of these identities using an arbitrary weight function.  

Our first identity is a generalization of a very familiar identity involving Fibonacci numbers.  This identity appears in the paper~\cite{Munarini} of Munarini, but we provide a tiling proof here.  To prove it we need a definition.  Let a {\it break} in a tiling be a place where two tiles come together.  Consider the tiling of a $5\times 1$ board given below.  There are breaks between positions 2 and 3 and between positions 3 and 4 in this tiling.

\begin{pspicture}(0,-.5)(5,1)
\pspolygon(0,0)(5,0)(5,1)(0,1)
\psline(2,0)(2,1)
\psline(3,0)(3,1)
\rput(.5,-.3){1}
\rput(1.5,-.3){2}
\rput(2.5,-.3){3}
\rput(3.5,-.3){4}
\rput(4.5,-.3){5}
\end{pspicture}

\begin{thm} For $m\geq1$ and $n\geq1$,
$$F_{m+n}^k=F_m^kF_n^k+\sum_{i=2}^k\sum_{j=1}^{i-1}F_{m-j}^kF_{n-i+j}^k.$$
\end{thm}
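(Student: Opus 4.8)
The plan is to prove this identity bijectively by partitioning the set $T_{m+n}^k$ of tilings of an $(m+n)\times 1$ board according to the behavior at the boundary between square $m$ and square $m+1$. Since Theorem 1.1 gives $\left|T_{m+n}^k\right|=F_{m+n}^k$, it suffices to count these tilings a second way and match the total to the right-hand side. Every tiling falls into exactly one of two mutually exclusive cases: either there is a break between squares $m$ and $m+1$, or a single tile straddles that boundary.

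First I would dispose of the break case. A tiling with a break at that boundary decomposes uniquely as a tiling of the initial $m\times 1$ board followed by a tiling of the terminal $n\times 1$ board, and conversely any such pair glues back together. By Theorem 1.1 these are counted by $F_m^kF_n^k$, the leading term. Next I would handle the straddling case, where one tile covers both square $m$ and square $m+1$. Such a tile has length $i$ with $2\leq i\leq k$, since a tile of length $1$ cannot cover two squares. If it contributes $j$ of its squares to the left block, then it occupies squares $m-j+1,\dots,m,m+1,\dots,m-j+i$, with $j$ ranging over $1,\dots,i-1$ and the remaining $i-j$ squares landing in the right block. Removing this tile leaves a freely tiled board of length $m-j$ on its left and one of length $n-(i-j)=n-i+j$ on its right, so by Theorem 1.1 this configuration is realized in $F_{m-j}^kF_{n-i+j}^k$ ways. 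Summing over all admissible $i$ and $j$ produces the double sum, and adding the break case gives the claimed identity.

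The step I would be most careful about is the index bookkeeping together with the edge effects. When $j$ exceeds $m$ or $i-j$ exceeds $n$, one of the residual boards has negative length; here the convention $F_\ell^k=0$ for $\ell<0$ built into the definition of the $k$-Fibonacci numbers forces those terms to vanish, which correctly reflects that no such straddling tile physically fits. The main obstacle is thus largely verificational: confirming that the ranges $2\leq i\leq k$ and $1\leq j\leq i-1$ enumerate each straddling tile exactly once, and that the break case and straddling case are genuinely disjoint and exhaustive, so that the two counts of $T_{m+n}^k$ agree term by term.

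I would also remark that this argument is the $q=1$, unit-weight specialization of the general framework of Theorem 1.2: the same boundary decomposition, carried out with the multiplicative weight function $wt$ and the shift polynomials $s^\pm_{m,i}(q)$ recording the lengths of the appended boards, yields the corresponding $q$-analogue of this identity for an arbitrary weight. For the present statement, however, only the plain tile count is needed.
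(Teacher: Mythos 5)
Your proof is correct and follows essentially the same route as the paper: a case split on whether a break occurs between positions $m$ and $m+1$, with the straddling tile of length $i$ placed in one of $i-1$ ways to yield the double sum. Your extra attention to negative-length residual boards (handled by the convention $F_\ell^k=0$ for $\ell<0$) is a sound verificational point that the paper leaves implicit.
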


\begin{proof}
Consider a tiling of an $(m+n)\times1$ board using tiles of length at most $k$.
Now, there is either a break between the $m^{th}$ and ${m+1}^{st}$ positions or not.  If there is a break, then the number of tilings of the first $m$ positions is $F_m^k$ and the number of tilings of the remaining $n$ positions is $F_n^k$.  So, the total number of tilings if there is a break is $F_m^kF_n^k$.

If there is no break between the $m^{th}$ and ${m+1}^{st}$ positions, there must be one tile covering both of these positions.  This tile can be of length $2,3,\dots,$ or $k$.  If the tile is of length 2, there is only one way it can cover positions $m$ and $m+1$.  The total number of tilings of the board with a tile of length 2 covering the $m^{th}$ and $m+1^{st}$ positions is $F_{m-1}^kF_{n-1}^k.$  

If the tile covering the $m^{th}$ and $m+1^{st}$ position is a tile of length 3 then there are two possible locations for this tile.  If the $m^{th}$ and $m+1^{st}$ positions are covered by the first and second positions in this tile then there are $F_{m-1}^kF_{n-2}^k$ tilings.  If they are covered by the second and third positions then there are $F_{m-2}^kF_{n-1}^k$ tilings.  So, the total number of tilings of the board with a tile of length 3 covering the $m^{th}$ and $m+1^{st}$ positions is $F_{m-1}^kF_{n-2}^k+F_{m-2}^kF_{n-1}^k=\sum_{j=1}^2{F_{m-j}^kF_{n-3+j}}.$

In general, if we consider a tile of length $i$, there are $i-1$ ways for it to cover the $m^{th}$ and $m+1^{st}$ positions, creating a total of $F_{m-1}^kF_{n-(i-1)}^k+F_{m-2}^kF_{n-(i-2)}^k+\dots+ F_{m-(i-1)}^kF_{n-1}^k=\sum_{j=1}^{k-1}{F_{m-j}^kF_{n-i+j}^k}$ tilings.

Summing this last expression over $i$ and adding on the number of tilings with a break between the $m^{th}$ and $m+1^{st}$ positions gives the desired result.\end{proof}

\begin{thm} For $m\geq1$ and $n\geq1$ and an arbitrary weight function $wt$ we have, \begin{eqnarray*}F_{m+n}({\bf z};q)&=&F_m^k({\bf zs_n^+};q)F_n^k({\bf zs_m^-};q)\\&+&\sum_{i=2}^k\sum_{j=1}^{i-1}z_if_{i,m-j+1,n-i+j}(q)F_{m-j}^k({\bf zs_{n+j}^+};q)F^k_{n-i+j}({\bf zs^-_{m+i-j}};q).\end{eqnarray*}\end{thm}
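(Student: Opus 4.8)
The plan is to adapt the bijective decomposition from the proof of Theorem 2.1, now keeping track of tile weights via the multiplicativity of $wt$ together with the shift vectors $s_m^{\pm}$. First I would fix a tiling $T$ of the $(m+n)\times1$ board and split according to whether there is a break between positions $m$ and $m+1$. Since $wt(T)$ is the product of the weights of its tiles, the total weight $F_{m+n}^k({\bf z};q)=\sum_T wt(T)$ splits as a sum over the two cases, and within each case it factors into a contribution from the tiles lying in the first block and a contribution from those lying in the second block.

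In the break case, every tile of $T$ lies entirely in positions $1,\dots,m$ or entirely in positions $m+1,\dots,m+n$. The key observation is that a tile's weight $z_{i'}f_{i',\sigma,\tau}(q)$ depends on its starting position $\sigma$ and on the length $\tau$ of the board following it, both computed relative to the full $(m+n)$-board. For a tile in the first block, $\sigma$ agrees with its value in the sub-board $T_m^k$, but the following length $\tau$ is larger by exactly $n$; this is the correction encoded by $s_n^+$, so the first block contributes $F_m^k({\bf zs_n^+};q)$. For a tile in the second block, the following length is unchanged while its starting position is larger by $m$, which is the correction encoded by $s_m^-$, giving $F_n^k({\bf zs_m^-};q)$. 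Multiplicativity then yields the product $F_m^k({\bf zs_n^+};q)F_n^k({\bf zs_m^-};q)$.

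In the no-break case, a single tile of some length $i$ with $2\le i\le k$ covers positions $m$ and $m+1$, and it may begin at position $m-j+1$ for $1\le j\le i-1$. Such a tile is followed by a board of length $n-i+j$, so its own weight is exactly $z_i f_{i,m-j+1,n-i+j}(q)$, the monomial in the summand. The tiles preceding it tile a board of length $m-j$ whose following length is inflated by $n+j$ (hence the shift $s_{n+j}^+$), and the tiles after it tile a board of length $n-i+j$ whose starting positions are inflated by $m+i-j$ (hence the shift $s_{m+i-j}^-$). Summing the resulting products over $j$ and then over $i$, and adding the break-case term, gives the claimed identity.

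The main obstacle is the shift bookkeeping: one must check that for every tile in a sub-block the pair $(\sigma,\tau)$ read off from the full board agrees with the pair read off from the appended sub-board after applying the appropriate $s_{\ast}^{\pm}$ correction. Because each $f_{i',\sigma,\tau}(q)$ is a monomial, the discrepancy between the two readings is itself a monomial factor, and verifying that this factor is precisely $s^+_{m,i'}$ (respectively $s^-_{m,i'}$) for the relevant shift parameter is the crux of the argument. Everything else follows formally from the multiplicativity of $wt$ and the same case split already used in the proof of Theorem 2.1.
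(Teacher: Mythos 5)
Your proposal is correct and follows essentially the same argument as the paper's own proof: the same case split on whether a break occurs between positions $m$ and $m+1$, the same identification of the straddling tile's weight as $z_if_{i,m-j+1,n-i+j}(q)$, and the same use of the shift vectors ${\bf s^+}$ and ${\bf s^-}$ to account for the appended boards on either side. The only difference is expository—you make the shift bookkeeping explicit as the crux, whereas the paper treats it as immediate from the definitions of $s_{m,i}^{\pm}(q)$.
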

\begin{proof}

This proof is very similar to the one above, we simply consider the weights of the tiles we are using.  

If there is a break between the $m^{th}$ and $m+1^{st}$ tile then the $m\times 1$ board at the beginning has an $n\times 1$ board appended to the end.  Thus the number of weighted tilings of the first $m$ positions is $F_m^k({\bf zs_n^+};q)$.  Similarly the number of weighted tilings of the final $n$ positions is $F_n^k(zs^-_m;q)$.  Thus, the number of weighted tilings of an $(m+n)\times1$ board where there is a break between the $m^{th}$ and $m+1^{st}$ positions is $F_m^k({\bf zs_n^+};q)F_n^k({\bf zs_m^-};q)$.  

Suppose there is a tile of length $i$ covering the $m^{th}$ and $m+1^{st}$ positions.  Furthermore, assume that the $j^{th}$ position of this tile covers the $m^{th}$ position of the larger board.  This tile has weight $z_if_{i,m-j+1,n-i+j}(q)$ because it is of length $i$, it begins in position $m-j+i$ and there is a board of length $n-i+j$ following it.  The number of weighted tilings of the $m-j$ positions preceding this tile is $F_{m-j}^k({\bf zs^+_{n+j}};q)$ since each of these is a weighted tiling of an $(m-j)\times 1$ board with an $(n+j)\times1$ board appended to the end.  Similarly, the number of weighted tilings of the $n-i+j$ positions following the tile of length $i$ is $F_{n-i+j}^k({\bf zs_{m+i-j}};q)$. This gives  $$\sum_{i=2}^k\sum_{j=1}^{i-1}z_if_{i,m-j+1,n-i+j}(q)F_{m-j}^k({\bf zs_{n+j}^+};q)F^k_{n-i+j}({\bf zs^-_{m+i-j}};q)$$ weighted tilings of a board without a break between the $m^{th}$ and $m+1^{st}$ positions.

Summing this last expression over $i$ and adding on the number of tilings with a break between the $m^{th}$ and $m+1^{st}$ positions gives the desired result.\end{proof}

\begin{thm} For $n\geq1$,
$$F^k_n=F_n^{k-1}+\sum_{j=0}^{n-k}F_j^{k-1}F_{n-k-j}^k.$$
\end{thm}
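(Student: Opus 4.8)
The plan is to prove this bijectively by sorting the tilings counted by $F_n^k = |T_n^k|$ (Theorem 1.1) according to whether or not they use a tile of length exactly $k$. A tiling of the $n\times 1$ board that uses no tile of length $k$ is precisely a tiling using tiles of length at most $k-1$, and there are $F_n^{k-1}$ of these; this accounts for the first term on the right-hand side.

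For the tilings that do use at least one tile of length $k$, I would condition on the location of the \emph{leftmost} such tile. If this tile begins in position $j+1$, then positions $1$ through $j$ must be covered using only tiles of length at most $k-1$ (otherwise the leftmost length-$k$ tile would begin earlier), so there are $F_j^{k-1}$ ways to tile the prefix. The length-$k$ tile then occupies positions $j+1$ through $j+k$, and the remaining $n-k-j$ positions may be tiled freely with tiles of length at most $k$, giving $F_{n-k-j}^k$ possibilities. Since the length-$k$ tile must fit on the board we need $0 \le j \le n-k$, and summing the product $F_j^{k-1} F_{n-k-j}^k$ over this range counts all such tilings. Adding the two contributions gives the claimed identity.

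The decomposition is essentially forced, so there is little to obstruct it; the only points requiring care are that restricting the prefix to tiles of length at most $k-1$ is exactly what guarantees $j+1$ is the start of the \emph{first} length-$k$ tile (so the two classes are disjoint and no tiling is counted twice), and the boundary behavior of the index $j$. In particular, $j=0$ corresponds to the length-$k$ tile being the very first tile, and when $n<k$ the sum is empty, correctly reducing the identity to $F_n^k = F_n^{k-1}$.
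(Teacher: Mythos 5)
Your proof is correct and follows exactly the paper's argument: both classify tilings in $T_n^k$ by whether a tile of length $k$ appears, and then condition on the starting position $j+1$ of the leftmost such tile, with the prefix restricted to tiles of length at most $k-1$ and the suffix unrestricted. Your additional remarks on disjointness and the boundary cases $j=0$ and $n<k$ are sound refinements of the same decomposition, not a different route.
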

\begin{proof}
Consider a tiling of an $n\times1$ board consisting of tiles of length at most $k$.  Now, there is either a tile of length $k$ in the tiling or not.  If there is no tile of length $k$ in the tiling then there are $F_n^{k-1}$ tilings.  

Suppose there is a tile of length $k$ in the tiling.  Consider the location of the first tile of length $k$.  If the first tile of length $k$ begins at the $j+1^{st}$ position on the board then the first $j$ positions must be tiled with tiles of length at most $k-1$, which can be done in $F_j^{k-1}$ ways.  The final $n-k-j$ positions can be tiled with tiles of length at most $k$, which can be done in $F_{n-k-j}^k$ ways.  Summing over appropriate values of $j$ gives $F^k_n=F_n^{k-1}+\sum_{j=0}^{n-k}F_j^{k-1}F_{n-k-j}^k$.
\end{proof}

\begin{thm}  For $n\geq 1$ and an arbitrary weight function $wt$,
$$F_n^k({\bf z};q)=F_n^{k-1}({\bf z};q)+\sum_{j=0}^{n-k}z_kf_{k,j+1,n-k-j}(q)F_j^{k-1}({\bf zs^+_{n-j}};q)F_{n-k-j}^k({\bf zs^-_{k+j}};q).$$\end{thm}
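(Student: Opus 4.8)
The plan is to mimic the bijective decomposition used in the proof of Theorem 2.3, but to carry the tile weights along at each step, relying on the multiplicativity of $wt$ together with the shifted weight vectors to account for the untiled boards left on either side of a partial tiling. First I would partition the tilings in $T_n^k$ according to whether they contain a tile of length $k$. This is exactly the dichotomy from Theorem 2.3; all of the new content is in computing the weights, so the proof will run closely parallel to that one.

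For the tilings containing no tile of length $k$, I would observe that these are precisely the tilings in $T_n^{k-1}$, and that the weight of each such tiling is unchanged when it is viewed inside $T_n^k$: the weight of a tile depends only on its length $i$, its starting position $\sigma$, and the length $\tau$ of the board following it, and none of these quantities is affected by the mere availability of longer tiles. Since the full $n\times1$ board is tiled, with nothing appended on either side, summing these weights gives exactly $F_n^{k-1}({\bf z};q)$, with no shift applied.

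For the tilings that do contain a tile of length $k$, I would locate the first such tile and suppose it begins in position $j+1$, so that it occupies positions $j+1$ through $j+k$. This factors each tiling into three pieces: a prefix tiling of the first $j$ positions using tiles of length at most $k-1$, the length-$k$ tile itself, and a suffix tiling of the final $n-k-j$ positions using tiles of length at most $k$. By multiplicativity the weight factors as the product of the weights of these three pieces, and the work is to identify each factor. The middle tile has length $k$, starts in position $j+1$, and is followed by a board of length $n-k-j$, so its weight is $z_k f_{k,j+1,n-k-j}(q)$. The prefix is a tiling of a $j\times1$ board with a board of length $k+(n-k-j)=n-j$ appended to its end, so its contribution is $F_j^{k-1}({\bf zs^+_{n-j}};q)$; the suffix is a tiling of an $(n-k-j)\times1$ board with a board of length $j+k$ appended to its beginning, so its contribution is $F_{n-k-j}^k({\bf zs^-_{k+j}};q)$. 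Summing over $j$ from $0$ to $n-k$ and adding the first term yields the claimed identity.

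The main thing to get right—and essentially the only obstacle—is the bookkeeping on the shift indices: one must verify that the board appended to the end of the prefix has length $n-j$ rather than $n-k-j$, and that the board appended to the beginning of the suffix has length $k+j$. The check amounts to observing that a tile inside the prefix genuinely sees the remaining prefix squares, the full length-$k$ tile, and the entire suffix as the ``board following it,'' and symmetrically that a tile inside the suffix sees the prefix together with the length-$k$ tile preceding it. Once this is confirmed, the shifted vectors $s^+_{n-j}$ and $s^-_{k+j}$ are forced, and the identity follows immediately from the multiplicativity of $wt$.
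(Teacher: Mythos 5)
Your proposal is correct and follows exactly the route the paper intends: the paper's own proof of this theorem is just the one-line remark that it ``follows directly from the definitions, and the proof of Theorem 2.3,'' and your argument is precisely that proof of Theorem 2.3 with the weights carried through, including the correct identification of the shift factors ${\bf s^+_{n-j}}$ for the prefix and ${\bf s^-_{k+j}}$ for the suffix. Nothing is missing; you have simply written out the details the paper leaves implicit.
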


\begin{proof}  This follows directly from the definitions, and the proof of Theorem 2.3.  \end{proof}

Miles~\cite{Miles} proved the following determinant identity involving $k$-generalized Fibonacci numbers.  $$\det \left(\begin{array}{cccc} f_n^{(k)}&f_{n+1}^{(k)}&\cdots&f_{n+k-1}^{(k)}\\ f_{n+1}^{(k)}&f_{n+2}^{(k)}&\cdots&f_{n+k}^{(k)}\\ \vdots&\vdots&\ddots&\vdots\\ f_{n+k-1}^{(k)}&f_{n+k}^{(k)}&\cdots&f_{n+2k-2}^{(k)}\\ \end{array}\right)=(-1)^{\frac{(2n+k)(k-1)}{2}}.$$

We will use a method of lattice paths and their relationships to minors of a Toeplitz-like matrix for $k$-Fibonacci numbers to prove a weighted version of this theorem for $F_n^k$.  Lindstr\"{o}m~\cite{Lindstrom} introduced this method, and Gessel and Viennot~\cite{GesselViennot} showed that it has broad application.  This method was applied in \cite{SagGoytqFib} to determine a family of $q$-analogues of Fibonacci numbers.  We will describe it in it's entirety here and use it to determine a $q$-analogue of the identity $$\det \left(\begin{array}{cccc}F_{n+k-1}^k&F_{n+k}^k&\cdots&F_{n+2k-2}^k\\ F_{n+k-2}^k&F_{n+k-1}^k&\cdots&F_{n+2k-3}^k\\ \vdots&\vdots&\ddots&\vdots\\F_n^k&F_{n+1}^k&\cdots&F_{n+k-1}^k\\ \end{array}\right)=\left\{\begin{array}{cl}1&\mbox{ if }k\mbox{ is odd,}\\ (-1)^{n-1}&\mbox{ if }k\mbox{ is even.}\end{array}\right.$$  The proof of this identity is an immediate consequence of the proof of Theorem 2.5.

Consider the digraph $D_k=(V_k,A_k)$ with vertices labeled $0$, $1$, $2\dots$ and for each vertex $n$ there are $k$ arcs beginning at $n$ and ending at $n+1$, $n+2$, $\dots$, $n+k$ respectively.  It is easy to see that the number of directed walks from vertex $a$ to vertex $b$ is $F_{b-a}^k$.  Below is the portion of the digraph on the vertices $0,1,2,\dots,7$ for $k=3$.  All arcs are directed to the right.

\noindent\begin{pspicture}(1,-2)(15,2)
\psdots(1,0)(3,0)(5,0)(7,0)(9,0)(11,0)(13,0)(15,0)
\psline(1,0)(15,0)
\psline[linearc=.75](1,0)(3,1)(5,0)
\psline[linearc=.75](3,0)(5,1)(7,0)
\psline[linearc=.75](5,0)(7,1)(9,0)
\psline[linearc=.75](7,0)(9,1)(11,0)
\psline[linearc=.75](9,0)(11,1)(13,0)
\psline[linearc=.75](11,0)(13,1)(15,0)
\psline[linearc=.75](1,0)(5,-1)(7,0)
\psline[linearc=.75](3,0)(7,-1)(9,0)
\psline[linearc=.75](5,0)(9,-1)(11,0)
\psline[linearc=.75](7,0)(10,-1)(13,0)
\psline[linearc=.75](9,0)(12,-1)(15,0)
\rput(1,.3){$0$}
\rput(3,.3){$1$}
\rput(5,.3){$2$}
\rput(7,.3){$3$}
\rput(9,.3){$4$}
\rput(11,.3){$5$}
\rput(13,.3){$6$}
\rput(15,.3){$7$}
\end{pspicture}

Let the edge from $n$ to $n+i$ for $1\leq i\leq k$ be written $\vec{e}_{n,n+i}$ and let an arbitrary weight $wt(\vec{e}_{n,n+i})$ be the same as the corresponding weight of a tile of length $i$ in a tiling.  Let $p$ be a directed path from vertex $a$ to vertex $b$ and let $wt(p)$ be the product of the weights of its arcs.  We have that $$\sum_{p} wt(p)=F_{b-a}^k({\bf s^-_az};q),$$ where the sum is over all paths $p$ from $a$ to $b$.  We will not consider the portion of the graph after $b$, so we do not need to include any term of the form $s_{m,i}^+(q)$.

Let ${\bf u}:u_1<u_2<\cdots<u_m$ and ${\bf v}:v_1<v_2<\cdots<v_m$ be sequences of vertices in $D_k$.  An $m$-tuple of paths from $\bf{u}$ to $\bf{v}$ is $$P=\left\{u_1\stackrel{p_1}{\rightarrow}v_{\alpha(1)},u_2\stackrel{p_2}{\rightarrow}v_{\alpha(2)},\dots,u_m\stackrel{p_m}{\rightarrow}v_{\alpha(m)}\right\},$$ where $\alpha\in S_m$, the symmetric group on $m$ elements.  Define the weight of such an $m$-tuple to be $wt(P)=\prod_{i=1}^mwt(p_i)$.  Let the sign of the $m$-tuple of paths be $\sgn(P)=\sgn(\alpha)$.

Now, the matrix above is the minor of the Toeplitz-like matrix $$\left(\begin{array}{cccc}F_0^k&F_1^k&F_2^k&\cdots\\ 0&F_0^k&F_1^k&\cdots\\ 0&0&F_0^k&\cdots\\ \vdots&\vdots&\vdots&\ddots\end{array}\right)$$ given by the first $k$ rows and columns $n+k-1$ through $n+2k-2$.  Since we are interested in the $q$-analogue of the identity above, we will be interested in the Toeplitz-like matrix $$\hat{F}=\left(\begin{array}{cccc}F_0^k(\bf{z};q)&F_1^k(\bf{z};q)&F_2^k(\bf{z};q)&\cdots\\ 0&F_0^k(\bf{s_1z};q)&F_1^k(\bf{s_1z};q)&\cdots\\ 0&0&F_0^k(\bf{s_2z};q)&\cdots\\ \vdots&\vdots&\vdots&\ddots\end{array}\right).$$  Let $\hat{F}_{\bf{u},\bf{v}}$ be the minor of this matrix given by the rows indexed by the sequence $\bf{u}$ and columns indexed by the sequence $\bf{v}$.  From the weight definitions above we have that $$\det \hat{F}_{\bf{u},\bf{v}}=\sum_{P}\sgn(P)wt(P),$$ where the sum is over all $m$-tuples of paths from $\bf{u}$ to $\bf{v}$.  

The minor we are interested in is the one with \begin{equation}\label{bob}{\bf u}=0,1,\dots,k-1 \mbox{ and } {\bf v}=n+k-1,n+k,\dots,n+2k-2.\end{equation}  Our goal is describe a simple way to compute the determinant of this minor. We will say that an $m$-tuple of paths is {\it noncrossing} if no two paths share a vertex.

Theorem 5.2 from ~\cite{SagGoytqFib} shows that $$\det \hat{F}_{\bf{u},\bf{v}}=\sum_{P}\sgn(P)wt(P),$$ where the sum is over all $m$-tuples of noncrossing paths.  This is sufficient to compute the determinant of $\hat{F}_{{\bf u},{\bf v}}$.  We observe that because the vertices ${\bf u}:0,1,\dots,k-1$ are adjacent and the vertices ${\bf v}:n+k-1,n+k,\dots,n+2k-2$ are adjacent, the only $k$-tuple of paths from $ \bf{u}$ to $\bf{v}$ are paths that consist entirely of the edges $\vec{e}_{i,i+k}$ for $0\leq i\leq n+k-2$.  Let $P_{n}^k$ be this $k$-tuple of paths.  

The $k$-tuple of paths $P_n^k$ consists entirely of arcs of length $k$.  We need to determine the weights of these arcs in the context of  $z_kf_{k,\sigma,\tau}(q)$.  Suppose that $n+k-1=pk+r$ where $p$ and $r$ are nonnegative integers and $r<k$.  The first $r$ arcs are followed by paths of length $pk$.  Thus the product of the weights of these arcs is $\prod_{i=1}^rz_kf_{k,i,pk}(q)$.  Now, the next $k$ arcs are each followed by a path of length $(p-1)k$, so the weights of these arcs is $\prod_{i=r+1}^{r+k}z_kf_{k,i,(p-1)k}(q)$.  This process continues until we get to the last $k$ arcs which are followed by paths of length 0.  Thus, the weights of the arcs are $$z_k^{n+k-1}\prod_{i=1}^rf_{k,i,pk}(q)\prod_{j=0}^{p-1}\prod_{a=1}^kf_{k,r+jk+a,(p-j-1)k}(q).$$  

Due to the nature of the paths in $P^n_k$ the associated permutation $\alpha$ must be of the form $r(r+1)\dots k 12\dots (r-1)$, where $1\leq r\leq k$.  This permutation has $(r-1)(k-(r-1))$ inversions.  Thus, if $k$ is odd, $\sgn(\alpha)$ is even.  

If $k$ is even, then the parity of $\alpha$ is the same as the parity of $r-1$.  Now, suppose that $n+k-1$ is divisible by $k$.  This implies that $n-1$ is divisible by $k$ and hence is even.  Also, if $n+k-1$ is divisible by $k$ then $\alpha=123\dots k$, i.e. $r=1$.  Thus, $\alpha$ is even when $n-1$ is even.

This gives us the following theorem.

\begin{thm}  Let ${\bf u}$ and ${\bf v}$ be as in \eqref{bob} and $n+k-1=pk+r$ where $p$ and $r$ are nonnegative integers and $r<k$, then we have that $$\det \hat{F}_{\bf{u},\bf{v}}=\left\{\begin{array}{lc}z_k^{n+k-1}\prod_{i=1}^rf_{k,i,pk}(q)\prod_{j=0}^{p-1}\prod_{a=1}^kf_{k,r+jk+a,(p-j-1)k}(q)&\mbox{ if } k \mbox{ is odd,}\\ (-1)^{n-1}z_k^{n+k-1}\prod_{i=1}^rf_{k,i,pk}(q)\prod_{j=0}^{p-1}\prod_{a=1}^kf_{k,r+jk+a,(p-j-1)k}(q)&\mbox{ if } k \mbox{ is even.}\end{array}\right.$$\end{thm}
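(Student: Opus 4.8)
The plan is to finish the lattice-path computation set up above. By Theorem 5.2 of \cite{SagGoytqFib} the determinant of the minor equals a signed weighted sum over \emph{noncrossing} $k$-tuples of paths from ${\bf u}$ to ${\bf v}$, so the whole problem reduces to (i) determining which noncrossing tuples exist and (ii) evaluating the sign and weight of each. The heart of the matter, and what I expect to be the main obstacle, is showing that $P_n^k$ is the \emph{only} noncrossing tuple; once this is known the determinant collapses to the single term $\sgn(P_n^k)\,wt(P_n^k)$ and the rest is the bookkeeping indicated in the paragraphs above.

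For uniqueness I would argue by a global count rather than by tracking individual paths. Let $P$ be any noncrossing $k$-tuple from ${\bf u}$ to ${\bf v}$. Since every arc increases the vertex label, the total horizontal distance covered is $\sum_j v_j-\sum_i u_i=k(n+k-1)$, independent of the matching permutation $\alpha$. If $N$ is the total number of arcs, then each arc has length at most $k$, whence $k(n+k-1)\le kN$ and $N\ge n+k-1$. On the other hand the paths are vertex-disjoint and all their vertices lie in $\{0,1,\dots,n+2k-2\}$, so the number of vertices used, namely $N+k$, is at most $n+2k-1$, giving $N\le n+k-1$. Hence $N=n+k-1$ and the distance inequality holds with equality, which forces every arc to have length exactly $k$. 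A family of vertex-disjoint length-$k$ paths meeting ${\bf u}$ and ${\bf v}$ is then uniquely determined, since each path must be a full residue class modulo $k$ inside $\{0,\dots,n+2k-2\}$, and this family is precisely $P_n^k$.

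It remains to read off the sign. The matching attached to $P_n^k$ sends the source $i-1$ to the unique sink congruent to $i-1$ modulo $k$; writing $n+k-1=pk+r$, this is the cyclic shift with $\alpha(i)\equiv i-r \pmod k$, whose inversion number is $\inv(\alpha)=r(k-r)$. When $k$ is odd exactly one of $r$ and $k-r$ is even, so $\inv(\alpha)$ is even and $\sgn(P_n^k)=1$. When $k$ is even one has $r(k-r)\equiv r\pmod 2$, and the relation $n+k-1=pk+r$ with $k$ even gives $r\equiv n-1\pmod 2$; hence $\sgn(P_n^k)=(-1)^{n-1}$.

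For the weight I would group the $n+k-1$ arcs of $P_n^k$ by the length of the board trailing each one toward its own sink, exactly as in the discussion above. The $r$ initial arcs of the long paths occupy positions $1,\dots,r$ and are each trailed by a board of length $pk$, contributing $z_k^{\,r}\prod_{i=1}^r f_{k,i,pk}(q)$; while for $j=0,\dots,p-1$ the block of $k$ arcs in positions $r+jk+1,\dots,r+jk+k$ is trailed by a board of length $(p-j-1)k$, contributing $z_k^{\,k}\prod_{a=1}^k f_{k,r+jk+a,(p-j-1)k}(q)$. Collecting the powers of $z_k$ into $z_k^{\,n+k-1}$ and multiplying by the sign computed above yields the stated formula, with the two cases governed by the parity of $k$.
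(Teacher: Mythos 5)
Your proposal is correct, and its skeleton is the same as the paper's: reduce $\det \hat{F}_{{\bf u},{\bf v}}$ via Theorem 5.2 of~\cite{SagGoytqFib} to a signed sum over noncrossing $k$-tuples, show $P_n^k$ is the only such tuple, then compute its sign and weight. The difference lies in how you execute the two nontrivial steps, and in both cases your version is more complete than the published one. For uniqueness, the paper merely \emph{observes} that adjacency of the sources and of the sinks forces every noncrossing tuple to consist of length-$k$ arcs; you actually prove it, by playing the displacement bound $k(n+k-1)=\sum_j v_j-\sum_i u_i\le kN$ against the vertex-disjointness bound $N+k\le n+2k-1$, forcing $N=n+k-1$ with every arc of length exactly $k$, after which the tuple is pinned down by residue classes modulo $k$. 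That double count is a genuine addition that closes a gap the paper leaves to the reader. For the sign, your computation ($\inv(\alpha)=r(k-r)$, even when $k$ is odd; congruent to $r\equiv n-1\pmod 2$ when $k$ is even) covers all cases, whereas the paper's discussion of the even case only verifies the parity claim when $k$ divides $n+k-1$ (its $r=1$ case in the permutation parametrization), so the general statement is asserted there without a full argument. Your weight bookkeeping, grouping the $n+k-1$ arcs into the first $r$ followed by boards of length $pk$ and then $p$ blocks of $k$ arcs followed by boards of length $(p-j-1)k$, agrees exactly with the paper's.
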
 

In the next sections we will give specific $q$-analogues of the identities listed below by determining distributions of statistics over certain combinatorial objects.  We will show that the distributions depend only on the ``tiles'', which will allow us to determine $f_{i,\sigma,\tau}(q)$, $s^-_{m,i}(q)$, and $s^+_{m,i}(q)$ for each of these sets and statistics.  We will then be able to simply substitute into the identities determined above to give $q$-analogues.  

Because we will be determining $q$-analogues using many different statistics and many different sets, we will call all of them $F_n^k({\bf z};q)$.  We hope that the context maintains the clarity of which $q$-analogue we are discussing.  

\begin{eqnarray*}
F_n^k({\bf z};q)&=&z_1f_{1,1,n-1}(q)F_{n-1}^k({\bf s^-_1z};q)+z_2f_{2,1,n-2}(q)F_{n-2}^k({\bf s^-_2z};q)+\cdots+\\&&z_kf_{k,1,n-k}(q)F_{n-1}^k({\bf s^-_kz};q).\\
F_{m+n}({\bf z};q)&=&F_m^k({\bf zs_n^+};q)F_n^k({\bf zs_m^-};q)+\\ &&\sum_{i=2}^k\sum_{j=1}^{i-1}z_if_{i,m-j+1,n-i+j}(q)F_{m-j}^k({\bf zs_{n+j}^+};q)F^k_{n-i+j}({\bf zs^-_{m+i-j}};q).\\
F_n^k({\bf z};q)&=&F_n^{k-1}({\bf z};q)+\sum_{j=0}^{n-k}z_kf_{k,j+1,n-k-j}(q)F_j^{k-1}({\bf zs^+_{n-j}};q)F_{n-k-j}^k({\bf zs^-_{k+j}};q).\\
\det\hat{F}_{\bf{u},\bf{v}}&=&\left\{\begin{array}{ll}z_k^{n+k-1}\prod_{i=1}^rf_{k,i,pk}(q)\prod_{j=0}^{p-1}\prod_{a=1}^kf_{k,r+jk+a,(p-j-1)k}(q)&k \mbox{ odd,}\\ (-1)^{n-1}z_k^{n+k-1}\prod_{i=1}^rf_{k,i,pk}(q)\prod_{j=0}^{p-1}\prod_{a=1}^kf_{k,r+jk+a,(p-j-1)k}(q)&k \mbox{ even.}\end{array}\right.
\end{eqnarray*}

\section{Permutations and $q$-Analogues}

We will consider distributions of permutation statistics over certain sets of permutations that are counted by the $k$-Fibonacci numbers.  These distributions will give rise to specific $q$-analogues of the identities discussed above.  For each statistic we will only need to determine the weight of a tile and the shift factor and substitute these into the identities from Section 2.  The sets of permutations we will discuss arise naturally from the study of pattern avoidance.  We will discuss the structure of the permutations in these sets and refer the reader to papers in which these results appear, but we will not discuss the idea of pattern avoidance.  We refer the reader to~\cite{Wilfpatterns} for an overview of pattern avoidance.  

We define {\it layered permutations} to be permutations of the form $$m(m+1)\dots n j(j+1)\dots (m-1)\dots 12\dots (i-1).$$  Let $LP_n$ be the set of layered permutations in $S_n$, then for example $$LP_4=\{1234,2341,3412,3421,4123,4231,4312,4321\}.$$  Each contiguous increasing sequence in a layered permutation is called a {\it layer}.  For example the permutation $4231$ has three layers, $4$, $23$, and $1$.  We will say that the {\it length} of a layer is the number of elements in the layer. 

Let $r:S_n\rightarrow S_n$ be the bijective map that satisfies, $r(\pi_1\pi_2\dots\pi_n)=\pi_n\pi_{n-1}\dots\pi_1$, where $\pi_1\pi_2\dots\pi_n\in S_n$.  We call $r$ the {\it reversal} map.  Let $RLP_n=\{r(\pi):\pi\in LP_n\}$ be the set of {\it reverse layered permutations}.  The layers of a permutation in $RLP_n$ are the reverse of the layers of the corresponding permutation in $LP_n$.  For example, $RLP_4=\{4321,1432,2143,1243,3214,1324,2134,1234\}$.

Let $PRLP_n$ be the set of permutations obtained from $LP_n$ be reversing all but the last element in each layer of a permutation $\pi\in LP_n$.  We will call these {\it partially reversed layered permutations}.  We have $PRLP_4=\{3214,3241,3412,3421,4213,4231,4312,4321\}$.

Let $LP^k_n$ ($RLP_n^k$, $PRLP_n^k$) be the sets of layered (reverse layered, partially reverse layered) permutations with layers of length at most $k$.  Mansour~\cite{MansourkFib} proves that $|LP_n^k|=|RLP_n^k|=|PRLP_n^k|=F_n^k$.  In~\cite{GoytMathisen} the first author and Mathisen study statistical distributions over $RLP_n^2$.

We observe that any element of $LP_n^k$, $RLP_n^k$, or $PRLP_n^k$ is uniquely determined by its layers, and these layers can be thought of as tiles in a tiling of an $n\times 1$ board.  This gives an obvious bijection with $T_n^k$.  In the following subsections we will discuss specific permutation statistics and their associated weight functions for $LP_n^k$, $RLP_n^k$, and $PRLP_n^k$, and use them to give specific $q$-analogues of the identities from Section 2.

\subsection{The Inversion Statistic}

Let $\pi\in S_n$.  We say $\pi(i)$ and $\pi(j)$ form an inversion if $i<j$ and $\pi(i)>\pi(j)$.  Let $\inv(\pi)$ be the number of inversions in $\pi$.  For example, the permutation $\pi=453612$ has $\inv(\pi)=10$.  

\subsubsection{ $LP_n^k$}

Consider the set $LP_n^k$.  Let $\pi=\pi_1\pi_2\dots\pi_m\in LP_n^k$ where $\pi_j$ is a layer for $1\leq j\leq m$.  Then any element in the layer $\pi_j$ is larger than every element in a layer following $\pi_j$.  Furthermore, the elements of $\pi_j$ are in increasing order.  Thus, there are no inversions inside of the layer $\pi_j$.  If $\pi_j$ has length $i$, denoted $\ell(\pi_j)=i$ and the length of the permutation following $\pi_j$ is $\ell(\pi_{j+1}\dots\pi_m)$.  Then we will say the weight of $\pi_j$ is $wt(\pi_j)=z_iq^{i\ell(\pi_{j+1}\dots\pi_m)}$.  In the context of the general weight functions we would say that $f_{i,\sigma,\tau}(q)=q^{i\ell(\pi_{j+1}\dots\pi_m)}=q^{i\tau}$.  From the definition of $\inv(\pi)$, we see that $wt(\pi)=\left(\prod_{j=1}^mz_{\ell(\pi_j)}\right)q^{\inv(\pi)}$.  Define $F_n^k({\bf z};q)=\sum_{\pi\in LP_n^k}wt(\pi)$.   

We need only determine the shift factors $s_{m,i}^+(q)$ and $s_{m,i}^-(q)$ for this set and this statistic.  Since the weight of a layer only depends on the length of the permutation appearing after this layer, we must have that for all $i$ and $m$, $s_{m,i}^-(q)=1$ and $s_{m,i}^+(q)=q^{im}$.  Thus $F_n^k({\bf zs_m^+};q)=F_n^k(z_1q^m,z_2q^{2m},\dots,z_kq^{km})$, which essentially means that the weight of each position is increased by a factor of $q^m$.  Since there are $n$ positions, we have $$F_n^k({\bf zs^+_m};q)=q^{nm}F_n({\bf z};q).$$

For $n\geq1$, the $q$-analogue of the $k$-Fibonacci numbers for the distribution of the inversion statistic over $LP_n^k$ satisfies $$F_n^k({\bf z};q)=z_1q^{n-1}F_{n-1}^k({\bf z};q)+z_2q^{2(n-2)}F_{n-2}({\bf z};q)+\dots+z_kq^{k(n-k)}F_{n-k}({\bf z};q),$$ where $F_0^k({\bf z};q)=1$ and $F_n^k({\bf z};q)=0$ if $n<0$.  

Given the weight and shifting factor associated to the distribution of the inversion statistic over $LP_n^k$ we obtain the following identities for ${\bf u}$ and ${\bf v}$ as in \eqref{bob} and $n+k-1=pk+r$ with $p$ and $r$ nonnegative integers and $r<k$: 

\begin{eqnarray*}
F_{m+n}({\bf z};q)&=&q^{nm}F_m^k({\bf z};q)F_n^k({\bf z};q)+\sum_{i=2}^k\sum_{j=1}^{i-1}z_iq^{i(n+j)}q^{(m-j)(n+j)}F_{m-j}^k({\bf z};q)F_{n-i+j}({\bf z};q).\\
F_n^k({\bf z};q)&=&F_n^{k-1}({\bf z};q)+\sum_{j=0}^{n-k}z_kq^{k(n-k-j)}q^{j(n-j)}F_j^{k-1}({\bf z};q)F_{n-k-j}^k({\bf z};q).\\
\det\hat{F}_{\bf{u},\bf{v}}&=&\left\{\begin{array}{lc}z_k^{n+k-1}q^{prk^2+k^3\binom{p}{2}}&\mbox{ if } k \mbox{ is odd,}\\ (-1)^{n-1}z_k^{n+k-1}q^{prk^2+k^3\binom{p}{2}}&\mbox{ if } k \mbox{ is even.}\end{array}\right. 
\end{eqnarray*}

\subsubsection{$RLP^k_n$}

Consider the set $RLP^k_n$.  Let $\pi=\pi_1\pi_2\dots\pi_m\in RLP_n^k$, where each $\pi_j$ is a layer.  Since all of the elements of $\pi_i$ are less than all of the elements of $\pi_j$ if $i<j$, we need only look inside each layer to determine the number of inversions.  The elements of any layer $\pi_j$ are in decreasing order, so if $\ell(\pi_j)=i$ then there are $\binom{i}{2}$ inversions.  This gives us that $f_{i,\sigma,\tau}(q)=q^{\binom{i}{2}}$ in this case, so $wt(\pi_j)=z_iq^{\binom{i}{2}}$, and hence $wt(\pi)=\left(\prod_{j=1}^mz_{\ell(\pi_j)}\right)q^{\inv(\pi)}$.  Define $F_n^k({\bf z};q)=\sum_{\pi\in RLP_n^k}wt(\pi)$.

Finally, we need to determine the shifting factors.  Since the number of inversions does not depend on the location of the layer, we have that $s^-_{m,i}=1$ and $s^+_{m,i}=1$ for each $m$ and $i$.  

For $n\geq1$, the $q$-analogue of the $k$-Fibonacci numbers for the distribution of the inversion statistic over $RLP_n^k$ satisfies $$F_n^k({\bf z};q)=z_1F_{n-1}^k({\bf z};q)+z_2qF_{n-2}({\bf z};q)+\dots+z_kq^{\binom{k}{2}}F_{n-k}({\bf z};q),$$ where $F_0^k({\bf z};q)=1$ and $F_n^k({\bf z};q)=0$ if $n<0$. 

Given the weight and shifting factor associated to the distribution of the inversion statistic over $RLP_n^k$ we obtain the following identities for ${\bf u}$ and ${\bf v}$ as in \eqref{bob}: 

\begin{eqnarray*}
F_{m+n}({\bf z};q)&=&F_m^k({\bf z};q)F_n^k({\bf z};q)+\sum_{i=2}^k\sum_{j=1}^{i-1}z_iq^{\binom{i}{2}}F_{m-j}^k({\bf z};q)F_{n-i+j}^k({\bf z};q).\\
F_n^k({\bf z};q)&=&F_n^{k-1}({\bf z};q)+\sum_{j=0}^{n-k}z_kq^{\binom{k}{2}}F_j^{k-1}({\bf z};q)F_{n-k-j}^k({\bf z};q).\\
\det \hat{F}_{{\bf u},{\bf v}}&=&\left\{\begin{array}{lc}z_k^{n+k-1}q^{(n+k-1)\binom{k}{2}}&\mbox{ if } k \mbox{ is odd,}\\ (-1)^{n-1}z_k^{n+k-1}q^{(n+k-1)\binom{k}{2}}&\mbox{ if } k \mbox{ is even.}\end{array}\right.
\end{eqnarray*}

\subsubsection{$PRLP_n^k$}

Let $\pi=\pi_1\pi_2\dots\pi_m\in PRLP_n^k$, where each $\pi_j$ is a layer.  Suppose $\pi_j$ is a layer of length $i$.  The first $i-1$ elements are in decreasing order now, with the largest element in the last position.  This gives $\binom{i-1}{2}$ inversions inside of each layer of length $i$.  Since the layers in $RPLP_n^k$ are in the same order as they were in $LP_n^k$ each element of $\pi_j$ is larger than any element in a layer following $\pi_j$.  This gives us that $f_{i,\sigma,\tau}(q)=q^{\binom{i-1}{2}+i\tau}$.  Hence, $wt(\pi_j)=z_iq^{\binom{i-1}{2}+i\tau}$ and $wt(\pi)=\left(\prod_{j=1}^mz_{\ell(\pi_j)}\right)q^{\inv(\pi)}$.  Define $F_n^k({\bf z};q)=\sum_{\pi\in PRLP_n^k}wt(\pi)$.  

As in the case of $LP_n^k$, we have that $s^+_{m,i}(q)=q^{im}$ and $s^-_{m,i}(q)=1$.  Thus $F_n^k({\bf zs_m^+};q)=F_n^k(z_1q^m,z_2q^{2m},\dots,z_kq^{km})=F_n^k({\bf zs^+_m};q)=q^{nm}F_n({\bf z};q).$

For $n\geq1$, the $q$-analogue of the $k$-Fibonacci numbers for the distribution of the inversion statistic over $PRLP_n^k$ satisfies $$F_n^k({\bf z};q)=z_1q^{n-1}F_{n-1}^k({\bf z};q)+z_2q^{2(n-2)}F_{n-2}({\bf z};q)+\dots+z_kq^{\binom{k-1}{2}+k(n-k)}F_{n-k}({\bf z};q),$$ where $F_0^k({\bf z};q)=1$ and $F_n^k({\bf z};q)=0$ if $n<0$.  

Given the weight and shifting factor associated to the distribution of the inversion statistic over $PRLP_n^k$ we obtain the following identities for ${\bf u}$ and ${\bf v}$ as in \eqref{bob} and $n+k-1=pk+r$ with $p$ and $r$ nonnegative integers and $r<k$: 

\begin{eqnarray*}
F_{m+n}({\bf z};q)&=&q^{nm}F_m^k({\bf z};q)F_n^k({\bf z};q)+\\&&\sum_{i=2}^k\sum_{j=1}^{i-1}z_iq^{\binom{i-1}{2}+i(n-i+j)}q^{(m-j)(n+j)}F_{m-j}^k({\bf z};q)F_{n-i+j}({\bf z};q).\\
F_n^k({\bf z};q)&=&F_n^{k-1}({\bf z};q)+\sum_{j=0}^{n-k}z_kq^{\binom{k-1}{2}+k(n-k-j)}q^{j(n-j)}F_j^{k-1}({\bf z};q)F_{n-k-j}^k({\bf z};q).\\
\det\hat{F}_{\bf{u},\bf{v}}&=&\left\{\begin{array}{lc}z_k^{n+k-1}q^{(n+k-1)\binom{k-1}{2}+prk^2+k^3\binom{p}{2}}&\mbox{ if } k \mbox{ is odd,}\\ (-1)^{n-1}z_k^{n+k-1}q^{(n+k-1)\binom{k-1}{2}+prk^2+k^3\binom{p}{2}}&\mbox{ if } k \mbox{ is even.}\end{array}\right. 
\end{eqnarray*}

\subsection{The Major Index}

Let $\pi\in S_n$.  We say that $\pi(i)$ and $\pi(i+1)$ form a {\it descent}  if $\pi(i)>\pi(i+1)$.  Define $D_\pi=\{i:\pi(i)\pi(i+1)\mbox{ is a descent}\}$.  We define the {\it major index} of $\pi$ to be $\maj(\pi)=\sum_{i\in D_\pi}i$.  For example, the permutation $\pi=453612$ has $\maj(\pi)=2+4=6$.

\subsubsection{$LP_n^k$}

Let $\pi=\pi_1\pi_2\dots\pi_m\in LP_n^k$, where $\pi_j$ is a layer for $1\leq j\leq m$.  The elements in the layers of $\pi$ are increasing, so there are no descents within layers.  The last element in the first $m-1$ layers is larger than the first element in the next layer.  Thus, in all but the last layer, there is a descent from the last element in the layer to the first element in the next layer.  For the major index we sum the locations of the the tops of the descents, which are the locations of the ends of all but the last layer.  Suppose $\pi_j$ is a layer of $\pi$ of length $i$.  Define $wt(\pi_j)=z_if_{i,\sigma,\tau}(q)=z_iq^{\sigma-1}$.  The term $q^{\sigma-1}$ counts the index of the descent starting in the layer preceding $\pi_j$.  Thus, $wt(\pi)=\left(\prod_{j=1}^mz_{\ell(\pi_j)}\right)q^{\maj(\pi)}$.  We define $$\sum_{\pi\in LP_n^k}wt(\pi)=F_n^k({\bf z};q).$$

We need only determine the shifting factors associated to this statistic.  Since $f_{i,\sigma,\tau}(q)$ depends only on the length of the permutation preceding the layer, we have $s_{m,i}^+(q)=1$ and $s_{m,i}^-(q)=q^m$.  Thus, ${\bf z s_m^-(q)}=(z_1q^m,z_2q^m,\dots,z_kq^m)$.  For simplicity, we will write ${\bf zs_m^-(q)}={\bf zq^m}$.  

For $n\geq1$, the $q$-analogue of the $k$-Fibonacci numbers for the distribution of the major index over $LP_n^k$ satisfies $$F_n^k({\bf z};q)=z_1F_{n-1}^k({\bf zq};q)+z_2F_{n-2}({\bf zq^2};q)+\dots+z_kF_{n-k}({\bf zq^k};q),$$ where $F_0^k({\bf z};q)=1$ and $F_n^k({\bf z};q)=0$ if $n<0$.  

Given the weight and shifting factor associated to the distribution of the major index over $LP_n^k$ we obtain the following identities for ${\bf u}$ and ${\bf v}$ as in \eqref{bob}:

\begin{eqnarray*}
F_{m+n}({\bf z};q)&=&F_m^k({\bf z};q)F_n^k({\bf zq^m};q)+\sum_{i=2}^k\sum_{j=1}^{i-1}z_iq^{m-j}F_{m-j}^k({\bf z};q)F_{n-i+j}^k({\bf zq^{m+i-j}};q).\\
F_n^k({\bf z};q)&=&F_n^{k-1}({\bf z};q)+\sum_{j=0}^{n-k}z_kq^{j}F_j^{k-1}({\bf z};q)F_{n-k-j}^k({\bf zq^{k+j}};q).\\
\det \hat{F}_{{\bf u},{\bf v}}&=&\left\{\begin{array}{lc}z_k^{n+k-1}q^{\binom{n+k-1}{2}}&\mbox{ if } k \mbox{ is odd,}\\ (-1)^{n-1}z_k^{n+k-1}q^{\binom{n+k-1}{2}}&\mbox{ if } k \mbox{ is even.}\end{array}\right.
\end{eqnarray*}

\subsubsection{$RLP_n^k$}

Let $\pi=\pi_1\pi_2\dots\pi_m\in RLP_n^k$, where $\pi_j$ is a layer for $1\leq j\leq m$.  The elements in each layer of $\pi$ are decreasing, and hence all but the last position of each layer is the top of a descent.  The last position of each layer is smaller than the first position of the next layer, so there are no descents between layers.  We are adding the positions of each descent to determine $\maj(\pi)$.  Consider the layer $\pi_j$, assume $\ell(\pi_j)=i$, and that $\pi_j$ begins in position $\sigma$.  Then the first $i-1$ elements of $\pi_j$ are in positions $\sigma$, $\sigma+1$, $\dots$, $\sigma+i-2$.  Summing these gives $(\sigma-1)(i-1)+\binom{i}{2}$.  Let $wt(\pi_j)=z_if_{i,\sigma,\tau}(q)=z_iq^{(\sigma-1)(i-1)+\binom{i}{2}}$.  This gives us that $wt(\pi)=\left(\prod_{j=1}^mz_{\ell(\pi_j)}\right)q^{\maj(\pi)}$.  We define $F_n^k({\bf z};q)=\sum_{\pi\in RLP_n^k}wt(\pi).$

We need only determine the shifting factors for this distribution.  Again $f_{i,\sigma,\tau}(q)$ only depends on the length of the permutation preceding the layer, we have that $s_{m,i}^+(q)=1$ and $s_{m,i}^-(q)=q^{(i-1)m}$ for each $i$ and $m$.  We have ${\bf z s_m^-(q)}=(z_1,z_2q^m,z_3q^{2m}\dots,z_kq^{(k-1)m})$.  For simplicity, we will write ${\bf zs_m^-(q)}={\bf zq^{(i-1)m}}$. 

For $n\geq1$, the $q$-analogue of the $k$-Fibonacci numbers for the distribution of the major index over $RLP_n^k$ satisfies $$F_n^k({\bf z};q)=z_1F_{n-1}^k({\bf zq^{(i-1)}};q)+z_2q^{}F_{n-2}({\bf zq^{(i-1)2}};q)+\cdots+z_kq^{\binom{k}{2}}F_{n-k}({\bf zq^{(i-1)k}};q),$$ where $F_0^k({\bf z};q)=1$ and $F_n^k({\bf z};q)=0$ if $n<0$.  

Given the weight and shifting factor associated to the distribution of the major index over $LP_n^k$ we obtain the following identities for ${\bf u}$ and ${\bf v}$ as in \eqref{bob}:

\begin{eqnarray*}
F_{m+n}({\bf z};q)&=&F_m^k({\bf z};q)F_n^k({\bf zq^{(i-1)m}};q)+\\&&\sum_{i=2}^k\sum_{j=1}^{i-1}z_iq^{(m-j)(i-1)+\binom{i}{2}}F_{m-j}^k({\bf z};q)F_{n-i+j}^k({\bf zq^{(i-1)(m+i-j)}};q).\\
F_n^k({\bf z};q)&=&F_n^{k-1}({\bf z};q)+\sum_{j=0}^{n-k}z_kq^{j(k-1)+\binom{k}{2}}F_j^{k-1}({\bf z};q)F_{n-k-j}^k({\bf zq^{(i-1)(k+j)}};q).\\
\det \hat{F}_{{\bf u},{\bf v}}&=&\left\{\begin{array}{lc}z_k^{n+k-1}q^{(k-1)\binom{n+k-1}{2}+\binom{k}{2}(n+k-1)}&\mbox{ if } k \mbox{ is odd,}\\ (-1)^{n-1}z_k^{n+k-1}q^{(k-1)\binom{n+k-1}{2}+\binom{k}{2}(n+k-1)}&\mbox{ if } k \mbox{ is even.}\end{array}\right.
\end{eqnarray*}

\subsubsection{$PRLP_n^k$}

Let $\pi=\pi_1\pi_2\dots\pi_m\in PRLP_n^k$, where $\pi_j$ is a layer for $1\leq j\leq m$.  Suppose $\ell(\pi_j)=i$.  Then the first $i-1$ elements of $\pi_j$ are decreasing.  So the first $i-2$ locations of $\pi_j$ are the tops of descents.  The last element in each layer in this case is larger than the first element in the next layer.  Thus, we combine the results of the distribution of the major index over $LP_n^k$ and $RLP_n^k$.  Because the element preceding the first element of $\pi_j$ is larger than the first element of $\pi_j$, part of the weight of $\pi_j$ is $\sigma-1$, i.e. the length of the permutation preceding $\pi_j$.  The descents in the first $i-2$ positions of $\pi_j$ contribute $(i-2)(\sigma-1)+\binom{i-1}{2}$.  Thus, we have that $wt(\pi_j)=z_iq^{(\sigma-1)(i-1)+\binom{i-1}{2}}$.  This gives us that $wt(\pi)=\left(\prod_{j=1}^mz_{\ell(\pi_j)}\right)q^{\maj(\pi)}$.  We define $$F_n^k({\bf z};q)=\sum_{\pi\in PRLP_n^k}wt(\pi).$$

We need only determine the shift factors $s_m^-(q)$ and $s_m^+(q)$.  Again, combining the results from $LP_n^k$ and $RLP_n^k$ we have that $s_{m,i}^+(q)=1$ and $s_{m,i}^-(q)=q^{(i-1)m}$.  We will write ${\bf zq^{(i-1)m}}$ for $(z_1,z_2q^{m},z_3q^{2m},\dots,z_kq^{(k-1)m})$.  

For $n\geq1$, the $q$-analogue of the $k$-Fibonacci numbers for the distribution of the major index over $PRLP_n^k$ satisfies $$F_n^k({\bf z};q)=z_1F_{n-1}^k({\bf zq^{(i-1)1}};q)+z_2F_{n-2}({\bf zq^{(i-1)2}};q)+\cdots+z_kq^{\binom{k-1}{2}}F_{n-k}({\bf zq^{(i-1)k}};q),$$ where $F_0^k({\bf z};q)=1$ and $F_n^k({\bf z};q)=0$ if $n<0$.  

Given the weight and shifting factor associated to the distribution of the major index over $PRLP_n^k$ we obtain the following identities for ${\bf u}$ and ${\bf v}$ as in \eqref{bob}:

\begin{eqnarray*}
F_{m+n}({\bf z};q)&=&F_m^k({\bf z};q)F_n^k({\bf zq^{(i-1)m}};q)+\\&&\sum_{i=2}^k\sum_{j=1}^{i-1}z_iq^{(m-j)(i-1)+\binom{i-1}{2}}F_{m-j}^k({\bf z};q)F_{n-i+j}^k({\bf zq^{(i-1)(m+i-j)}};q).\\
F_n^k({\bf z};q)&=&F_n^{k-1}({\bf z};q)+\sum_{j=0}^{n-k}z_kq^{(k-1)(j)+\binom{k-1}{2}}F_j^{k-1}({\bf z};q)F_{n-k-j}^k({\bf zq^{(i-1)(k+j)}};q).\\
\det \hat{F}_{{\bf u},{\bf v}}&=&\left\{\begin{array}{lc}z_k^{n+k-1}q^{(k-1)\binom{n+k-1}{2}+\binom{k-1}{2}(n+k-1)}&\mbox{ if } k \mbox{ is odd,}\\ (-1)^{n-1}z_k^{n+k-1}q^{(k-1)\binom{n+k-1}{2}+\binom{k-1}{2}(n+k-1)}&\mbox{ if } k \mbox{ is even.}\end{array}\right.
\end{eqnarray*}

\section{Set Partitions}

We now consider distributions of set partition statistics over certain set partitions that are counted by $k$-Fibonacci numbers.  Again these sets arise out of the theory of pattern avoidance.  We will describe these sets,  but not explain pattern avoidance in set partitions.  We refer the reader to~\cite{Goytpartitions3,Saganpartitionpatterns} where these sets can be found.  

A partition of $[n]=\{1,2,\dots,n\}$ is a family of disjoint subsets, $B_1,B_2,\dots,B_k$, of $[n]$ called {\it blocks} such that $\bigcup_{i=1}^k B_i=[n]$.  We will write $\pi=B_1/B_2/\dots/B_k$, where $\min B_1<\min B_2<\cdots<\min B_k$.  For example, $126/35/4/7$ is a partition of $[7]$.  

We say that a partition $\pi$ of $[n]$ is {\it layered} if $$\pi=12\dots i/(i+1)(i+2)\dots j/\dots/m(m+1)\dots n.$$  For example, $12/3/4567/89$ is a layered partition of $[9]$.  Let $L\Pi_n$ be the set of layered partitions, and let $L\Pi_n^k$ be the set of layered partitions with layer lengths at most $k$.  For example, $L\Pi_4=\{1234,1/234,12/34,123/4,1/2/34,1/23/4,12/3/4,1/2/3/4\}$, and $L\Pi_4^2=\{12/34,1/2/34,1/23/4,12/3/4,1/2/3/4\}$.  In~\cite{SagGoytqFib} the first author and Sagan study distributions of set partition statistics over $L\Pi_n^2$.  

As with layered permutations, there is an obvious bijection between $L\Pi_n^k$ and $T_n^k$.  As we did in Section 3, we will determine weights on layers by looking at distributions of set partition statistics on layered set partitions.  This will allow us to again determine $q$-analogues of the $k$-Fibonacci identities from Section 2.  

\subsection{The Right Bigger Statistic}

Wachs and White~\cite{WachsWhite} studied the distribution of the right bigger statistic and the left smaller statistic and determined $q$-analogues of the Stirling numbers of the second kind.  We will focus on the distributions of these two statistics on $L\Pi_n^k$.  

Let $\pi=B_1/B_2/\dots/B_m$ be a set partition.  For each element $b\in B_i$ with $i<j$, we have that $(b,B_j)$ is a {\it right bigger pair} if $b<\max B_j$.  Let  $rb(\pi)$ be the number of right bigger pairs in $\pi$.  

Again, let $\pi=B_1/B_2/\dots/B_m$ be a set partition.  For each element $b\in B_j$ with $j>i$, we have that $(b,B_i)$ is a {\it left smaller pair} if $b>\min B_i$.  Let $ls(\pi)$ be the number of left smaller pairs in $\pi$.  

The first author and Sagan~\cite{SagGoytqFib} show that $ls$ and $rb$ are equidistributed over layered set partitions with restricted block sizes.  Thus, we need only determine the identities for one of these statistics.  We will proceed to work with the $rb$ statistic.  

Notice that if $\pi=B_1/B_2/\dots/B_m\in L\Pi_n^k$ then every element in block $B_j$ is greater than every element in block $B_i$ if $i<j$.  Thus, the $\max B_j >b$ for each $b\in B_i$ with $i<j$. Now suppose that $|B_j|=i$, we will say the length of block $B_j$ is $i$.  In this case $wt(B_j)=z_if_{i,\sigma,\tau}(q)=z_iq^{\sigma-1}$, where $\sigma-1$ is the length of the partition preceding block $B_j$.  This shows that for $\pi=B_1/B_2/\dots/B_m\in L\Pi_n^k$, we have $wt(\pi)=\left(\prod_{j=1}^mz_{\ell(B_j)}\right)q^{rb(\pi)}$.  We define $$F_n^k({\bf z};q)=\sum_{\pi\in L\Pi_n^k}wt(\pi),$$ where $F_0({\bf z};q)=1$ and $F_n({\bf z};q)=0$ for $n<0$.  

The shift factors related to the distribution of the $rb$ statistic are $s_{m,i}^+(q)=1$ and $s_{m,i}^-(q)=q^m$.  The values of the shift vectors and the fact that $f_{i,\sigma,\tau}(q)=z_iq^{\sigma-1}$ give us that the distribution of the $rb$ statistic over $L\Pi_n^k$ is the same as the distribution of $\maj$ over $LP_n^k$.  Thus the results from Section 3.2.1 hold here.


\begin{thebibliography}{10}
\expandafter\ifx\csname url\endcsname\relax
  \def\url#1{\texttt{#1}}\fi
\expandafter\ifx\csname urlprefix\endcsname\relax\def\urlprefix{URL }\fi

\bibitem{GesselViennot}
I.~Gessel, G.~Viennot, Binomial determinants, paths, and hook length formulae,
  Adv. in Math. 58 (1985) 300--321.

\bibitem{Goytpartitions3}
A.~M. Goyt, Avoidance of partitions of a three-element set, Adv. in Appl. Math.
  41~(1) (2008) 95--114.
\newline\urlprefix\url{http://dx.doi.org/10.1016/j.aam.2006.07.006}

\bibitem{GoytMathisen}
A.~M. Goyt, D.~Mathisen, Permutation statistics and {$q$}-{F}ibonacci numbers,
  Electron. J. Combin. 16~(1) (2009) Research Paper 101, 15.
\newline\urlprefix\url{http://www.combinatorics.org/Volume_16/Abstracts/v16i1r101.html}

\bibitem{SagGoytqFib}
A.~M. Goyt, B.~E. Sagan, Set partition statistics and {$q$}-{F}ibonacci
  numbers, European J. Combin. 30~(1) (2009) 230--245.
\newline\urlprefix\url{http://dx.doi.org/10.1016/j.ejc.2008.01.015}

\bibitem{Lindstrom}
B.~Lindstr{\"o}m, On the vector representations of induced matroids, Bull.
  London Math. Soc. 5 (1973) 85--90.

\bibitem{MansourkFib}
T.~Mansour, Permutations avoiding a pattern from {$S_k$} and at least two
  patterns from {$S_3$}, Ars Combin. 62 (2002) 227--239.

\bibitem{Miles}
E.~P. Miles, Jr., Generalized {F}ibonacci numbers and associated matrices,
  Amer. Math. Monthly 67 (1960) 745--752.

\bibitem{Munarini}
E.~Munarini, A combinatorial interpretation of the generalized {F}ibonacci
  numbers, Adv. in Appl. Math. 19~(3) (1997) 306--318.
\newline\urlprefix\url{http://dx.doi.org/10.1006/aama.1997.0531}

\bibitem{Saganpartitionpatterns}
B.~E. Sagan, Pattern avoidance in set partitions, Ars Combin. 94 (2010) 79--96.

\bibitem{WachsWhite}
M.~Wachs, D.~White, {$p,q$}-{S}tirling numbers and set partition statistics, J.
  Combin. Theory Ser. A 56 (1991) 27--46.

\bibitem{Wilfpatterns}
H.~S. Wilf, The patterns of permutations, Discrete Math. 257 (2002) 575--583.

\end{thebibliography}
\end{document}